\documentclass[a4paper,12pt]{article}
\textwidth=15.5cm 
\topmargin=0cm 
\baselineskip=16pt
\hoffset=-1cm
\usepackage{latexsym}
\usepackage{amsmath,amsthm,amssymb}
\usepackage{amsfonts}
\usepackage{eufrak}

\newtheorem{thm}{Theorem}[section]
\newtheorem{lem}[thm]{Lemma}
\newtheorem{pro}[thm]{Proposition}
\newtheorem{cor}[thm]{Corollary}

\theoremstyle{definition}

\newtheorem{exa}[thm]{Example}
\newtheorem{rem}[thm]{Remark}

\begin{document}

\begin{center}
{\Large Bounds on tail probabilities for quadratic forms in dependent sub-gaussian random variables}
\end{center}
\begin{center}
{\sc Krzysztof Zajkowski}\\
Institute of Mathematics, University of Bialystok \\ 
Ciolkowskiego 1M, 15-245 Bialystok, Poland \\ 
kryza@math.uwb.edu.pl 
\end{center}

\begin{abstract}
We show bounds on tail probabilities for quadratic forms in sub-gaussian non-necessarily independent random variables. Our main tool will be estimates of the Luxemburg norms of such forms. This will allow us  to formulate the above-mentioned bounds. As an example we give estimates of the excess loss in fixed design linear regression in dependent observations. 

\end{abstract}

{\it 2010 Mathematics Subject Classification: 
60E15} 

{\it Key words: sub-gaussian and sub-exponential random variables,  
Luxemburg norm, Hanson--Wright inequality, linear regression.} 
  
\section{Introduction}

The most known  estimate of tail probabilities for quadratic forms is  the Hanson--Wright inequality regarding independent centered sub-gaussian random variables. A version of this inequality was first  proved in \cite{HanWri,Wri} and recently derived by Rudelson and Vershynin in \cite{RudVer}.  In this article we show estimates for quadratic forms in dependent random variables. Our main technique will be  estimates of sub-exponential norms of quadratic forms in sub-gaussian random variables.

A random variable $\xi$ is called sub-gaussian if it is dominated by Gaussian random variable. For centered $\xi$ it can be expressed by requiring that 
there is a constant $K$ such that $\mathbb{E}\exp(t\xi)\le \exp(K^2t^2/2)$ for all $t\in\mathbb{R}$ (see Kahane \cite{Kahane}). It means that its moment generating function
is majorized by the moment generating function of the centered Gaussian variable with the standard deviation $K$. The infimum of such $K$ is a norm on  the space of centered sub-gaussian random variables. It is standardly denoted by  $\tau(\cdot)$, and this space itself by $Sub(\Omega)$ on some probability space $(\Omega,\mathcal{F},\mathbb{P})$ (see Buldygin and Kozachenko \cite[Def.1.1 in Ch.1]{BulKoz}).

Non-necessarily centered sub-gaussian random variable $\xi$ can be defined by requiring that $\mathbb{E}\exp(\xi^2/K^2)\le 2$ for some $K>0$; the infimum of such $K$ is the Luxemburg norm on the Orlicz space generated by the function $\psi_2(t)=\exp(t^2)-1$. We will denote this space   by $L_{\psi_2}(\Omega)$ and the Luxemburg or $\psi_2$-norm by $\|\cdot\|_{\psi_2}$. Let us note that the norms $\tau(\cdot)$ and $\|\cdot\|_{\psi_2}$ are equivalent on the space 
$Sub(\Omega)=\{\xi\in L_{\psi_2}(\Omega):\;\mathbb{E}\xi=0\}$ (compare Vershynin \cite[Prop. 2.5.2]{Ver}).  
From now on let $C_2$ denote a universal constant such that $\tau(\xi)\le C_2\|\xi\|_{\psi_2}$ for $\xi\in Sub(\Omega)$.
A number of other equivalent definitions of norms are used in the literature, for instance by using estimates of moments of sub-gaussian random variables, but for our purposes, the above-mentioned norms will be sufficient. 

One can similarly define sub-exponential random variables, i.e. by requiring that their $\psi_1$-norm $\|\xi\|_{\psi_1}:=\inf\{K>0:\;\mathbb{E}\exp|\xi/K|\le 2\}<\infty$, where the function $\psi_1(t)=\exp(|t|)-1$. Let us note that if $\xi$ is a sub-gaussian random variable then $\xi^2$ is sub-exponential one and moreover $\|\xi^2\|_{\psi_1}=\|\xi\|_{\psi_2}^2$ (see \cite[Lem.2.7.6]{Ver} for instance).

Let us emphasize that for centered sub-exponential random variables $\xi$ there is not a global  estimate of  the moment generating function  as in sub-gaussian case. We can only formulate the following inequality 
\begin{equation}
\label{estMgf}
\mathbb{E}\exp(t\xi)\le \exp(C_1^2\|\xi\|_{\psi_1}^2t^2)
\end{equation}
for  $|t|\le 1/(C_1\|\xi\|_{\psi_1})$, where $C_1$ is an universal constant \footnote{The universal constants $C_1$, $C_2$ and, in consequence, $C_3$, $C_4$ will be the same in each occurrence and  universal constants $c$ may be different.}; compare Vershynin \cite[Prop. 2.7.1]{Ver}.

Let us recall also the Hanson--Wright inequality (see Rudelson and Vershynin \cite[Th.1.1]{RudVer}).
\begin{thm}
({\rm Hanson--Wright inequality})
Let $\xi=(\xi_1,...,\xi_n)\in \mathbb{R}^n$ be a random vector with independent coordinates  $\xi_i$ which satisfy $\mathbb{E}\xi_i=0$ and $\|\xi_i\|_{\psi_2}\le K$.
Let $A=[a_{ij}]_{i,j=1}^n$ be an $n\times n$ matrix. Then, for every $t\ge 0$, 
$$
\mathbb{P}\Big(\big|\xi^TA\xi-\mathbb{E}(\xi^TA\xi)\big|\ge t\Big)\le 
2\exp\Big(-c\min\Big\{\frac{t^2}{K^4\|A\|^2_{HS}}, \frac{t}{K^2\|A\|} \Big\}\Big),
$$
where $c$ is a universal constant, $\|A\|_{HS}=(\sum_{i,j=1}^na_{ij}^2)^{1/2}$ is the Hilbert-Schmidt norm of $A$, whereas $\|A\|=\sup_{|x|_2\le 1}|Ax|_2$ is the operator norm ($|\cdot|_2$ denotes the standard Euclidean norm in $\mathbb{R}^n$).
\end{thm}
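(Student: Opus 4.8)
The plan is to follow the route of Rudelson and Vershynin. By homogeneity --- replacing $\xi_i$ by $\xi_i/K$ and $A$ by $K^2A$, which changes neither side of the asserted inequality --- we may assume $K=1$. Decompose the centered form into its diagonal and off-diagonal parts,
$$\xi^TA\xi-\mathbb{E}(\xi^TA\xi)=\sum_{i=1}^n a_{ii}\big(\xi_i^2-\mathbb{E}\xi_i^2\big)+\sum_{i\neq j}a_{ij}\xi_i\xi_j=:D+S .$$
Since $\{|D+S|\ge t\}\subseteq\{|D|\ge t/2\}\cup\{|S|\ge t/2\}$, it is enough to prove a bound of the claimed shape separately for $\mathbb{P}(|D|\ge t/2)$ and for $\mathbb{P}(|S|\ge t/2)$, and then use the union bound (adjusting the universal constant $c$ at the end).

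\textbf{Diagonal part.} Since $\|\xi_i\|_{\psi_2}\le 1$, the variable $\xi_i^2$ is sub-exponential with $\|\xi_i^2\|_{\psi_1}=\|\xi_i\|_{\psi_2}^2\le 1$, so $\xi_i^2-\mathbb{E}\xi_i^2$ is centered sub-exponential with $\psi_1$-norm at most $2$. By independence and the centered sub-exponential moment generating function estimate \eqref{estMgf}, for $|\lambda|\le 1/(2C_1\max_i|a_{ii}|)$ we get $\mathbb{E}\exp(\lambda D)\le\exp\big(4C_1^2\lambda^2\sum_i a_{ii}^2\big)$. Using $\sum_i a_{ii}^2\le\|A\|_{HS}^2$ and $\max_i|a_{ii}|\le\|A\|$ and then optimizing $\lambda$ in the Chernoff bounds $\mathbb{P}(\pm D\ge t/2)\le\exp(-\lambda t/2)\,\mathbb{E}\exp(\pm\lambda D)$ yields the two-regime (Bernstein) estimate $2\exp\big(-c\min\{t^2/\|A\|_{HS}^2,\ t/\|A\|\}\big)$.

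\textbf{Off-diagonal part.} Let $\xi'=(\xi_1',\dots,\xi_n')$ be an independent copy of $\xi$. The classical decoupling inequality for off-diagonal quadratic forms, applied to the convex function $\exp$, gives $\mathbb{E}\exp(\lambda S)\le\mathbb{E}\exp\big(4\lambda\,\xi^TA\xi'\big)$. Conditionally on $\xi'$, the quantity $\xi^TA\xi'=\sum_i\xi_i(A\xi')_i$ is a linear combination of independent centered sub-gaussian variables, hence --- by the definition of $\tau(\cdot)$, independence, and $\tau(\xi_i)\le C_2\|\xi_i\|_{\psi_2}\le C_2$ --- conditionally sub-gaussian with $\tau(\cdot)^2\le C_2^2|A\xi'|_2^2$; therefore $\mathbb{E}\big[\exp(4\lambda\,\xi^TA\xi')\,\big|\,\xi'\big]\le\exp\big(8C_2^2\lambda^2|A\xi'|_2^2\big)$. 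Taking the expectation over $\xi'$, the task reduces to bounding $\mathbb{E}\exp(\mu|A\xi'|_2^2)$ for $\mu=8C_2^2\lambda^2$.

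\textbf{The main obstacle} is exactly this last bound, because $|A\xi'|_2^2$ is again a quadratic form in sub-gaussian variables. I would handle it by Gaussian linearization: with $g$ a standard Gaussian vector independent of $\xi'$, one has $\mathbb{E}_g\exp\big(\sqrt{2\mu}\,\langle g,A\xi'\rangle\big)=\exp(\mu|A\xi'|_2^2)$, so by Tonelli $\mathbb{E}_{\xi'}\exp(\mu|A\xi'|_2^2)=\mathbb{E}_g\,\mathbb{E}_{\xi'}\exp\big(\sqrt{2\mu}\,\langle A^Tg,\xi'\rangle\big)\le\mathbb{E}_g\exp\big(c\mu|A^Tg|_2^2\big)$, the inner estimate again being the sub-gaussian moment generating function bound applied coordinatewise in $\xi'$. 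Diagonalizing $AA^T$ writes $|A^Tg|_2^2=g^TAA^Tg=\sum_k s_k^2\,\tilde g_k^2$, where $s_k$ are the singular values of $A$ and $\tilde g_k$ are independent standard Gaussians, whence $\mathbb{E}_g\exp(c\mu|A^Tg|_2^2)=\prod_k(1-2c\mu s_k^2)^{-1/2}\le\exp\big(c'\mu\|A\|_{HS}^2\big)$ for $\mu\le c''/\|A\|^2$ (recall $\sum_k s_k^2=\|A\|_{HS}^2$ and $\max_k s_k=\|A\|$). Substituting $\mu=8C_2^2\lambda^2$ back gives $\mathbb{E}\exp(\lambda S)\le\exp\big(c\lambda^2\|A\|_{HS}^2\big)$ for $|\lambda|\le c/\|A\|$, and the Chernoff bound optimized over these two ranges of $\lambda$ produces the required estimate for $S$. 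Combining with the diagonal estimate and adjusting the universal constant completes the proof.
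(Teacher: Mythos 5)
Your argument is correct, but it is worth being clear about its relation to the paper: the paper does not prove this theorem at all --- it is recalled from Rudelson and Vershynin \cite{RudVer} --- and the only related derivation in the text is the later remark that treats the special case $g\sim\mathcal{N}({\bf 0},I_n)$ (diagonalization, the $\chi^2_1$ moment generating function, then Lemma \ref{lem2} and Remark \ref{remlem2}), with the reduction from general sub-gaussian $\xi$ to $g$ merely asserted via ``decoupling.'' What you supply is essentially the full Rudelson--Vershynin argument that the paper cites: the diagonal/off-diagonal split with a union bound; a Bernstein bound for the diagonal using $\|\xi_i^2-\mathbb{E}\xi_i^2\|_{\psi_1}\le 2$, inequality (\ref{estMgf}), $\sum_i a_{ii}^2\le\|A\|_{HS}^2$ and $\max_i|a_{ii}|\le\|A\|$; and, for the off-diagonal part, the standard convex decoupling inequality (which needs exactly the independence and mean-zero hypotheses you have), the conditional sub-gaussian bound giving $\exp(c\lambda^2|A\xi'|_2^2)$, and then the Gaussian-linearization trick $\exp(\mu|A\xi'|_2^2)=\mathbb{E}_g\exp(\sqrt{2\mu}\langle A^Tg,\xi'\rangle)$ followed by the exact $\chi^2$ computation $\prod_k(1-2c\mu s_k^2)^{-1/2}\le\exp(c'\mu\|A\|_{HS}^2)$ for $\mu\le c''/\|A\|^2$; all steps check, including the range restrictions on $\lambda$ that produce the operator-norm regime, and the final constant adjustment that restores the factor $2$. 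This is genuinely different from what the paper itself proves (Propositions \ref{lemtr} and \ref{lemHS} with Corollary \ref{twHS}): those results drop independence entirely but pay for it by losing the operator norm in the second regime, replacing $\|A\|$ by $\|A\|_{HS}$ (or the trace norm), whereas your proof uses independence crucially --- in the decoupling step and in factorizing the moment generating functions --- to obtain the sharper $t/(K^2\|A\|)$ term. The one point to flag explicitly if you write this up is that the decoupling inequality is an external result you are invoking, not something proved in this paper.
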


We will need a notion of sub-gaussian random vectors. One of the ways to define such vectors is by requiring that one dimensional marginals 
$\left\langle \xi,{\bf t}\right\rangle$ are sub-gaussian for all ${\bf t}\in\mathbb{R}^n$. The sub-gaussian norm of $\xi$ is defined as 
$\|\xi\|_{\psi_2}=\sup_{{\bf t}\in S^{n-1}}\|\left\langle \xi,{\bf t}\right\rangle\|_{\psi_2}$ (see \cite[Def. 5.22]{Ver}. Using the Luxemburg norm we can rewrite it as follows
$$
\|\xi\|_{\psi_2}=\inf\Big\{K>0:\;\sup_{{\bf t}\in S^{n-1}}\;\mathbb{E}\exp\big(\left\langle \xi,{\bf t}\right\rangle^2/K^2\big)\le 2\Big\}.
$$
The space of $n$-dimensional sub-gaussian random vectors we will denote by $L_{\psi_2}^n(\Omega)$.

Similarly as in one-dimensional case we can also introduce the definition of a norm for centered sub-gaussian random vectors $\xi$ in the form
$$
\tau(\xi)=\inf\Big\{K>0:\;\forall_{{\bf t}\in\mathbb{R}^n}\;\mathbb{E}\exp\left\langle \xi,{\bf t}\right\rangle\le \exp\big(K^2|{\bf t}|_2^2/2\big)\Big\};
$$
as in the case of the norm $\|\cdot\|_{\psi_2}$ we keep the notation $\tau(\cdot)$ in the multidimensional case.
The space of centered $n$-dimensional sub-gaussian random vectors we will denote by 
$Sub^n(\Omega)=\{\xi\in L_{\psi_2}^n(\Omega):\;\mathbb{E}\xi={\bf 0}\in \mathbb{R}^n\}$.

\begin{exa}
Recall that the moment generating function of $g=(g_i)_{i=1}^n\sim\mathcal{N}({\bf 0},I_n)$ ($g_i$ are independent standard normally distributed)  equals 
$\mathbb{E}\exp\left\langle g,{\bf t}\right\rangle 
=\exp(|{\bf t}|_2^2/2)$. It means that $\tau(g)=1$. Let $A$ be $n\times n$ matrix. The random vector $Ag$ has the centered Gaussian distribution with the covariance matrix  $Cov(Ag)=AA^T$. The moment generating function of $Ag$ can be estimated as follows
$$
\mathbb{E}\exp\left\langle Ag,{\bf t}\right\rangle=\exp\Big(|A^T{\bf t}|_2^2/2\Big)\le \exp\Big(\|A\|^2|{\bf t}|_2^2/2\Big).
$$
It means that in this case we have $\tau(Ag)\le\|A\|$. 
\end{exa}
\begin{rem}
For ${\bf t} \in \mathbb{R}^n$ and a sub-gaussian random vector $\xi\in Sub^n(\Omega)$ the random variable 
$\left\langle \xi,{\bf t}\right\rangle\in Sub(\Omega)$. Thus
$$
\mathbb{E}\exp(\left\langle \xi,{\bf t}\right\rangle)\le \exp\big(\tau(\left\langle \xi,{\bf t}\right\rangle)^2/2\big).
$$
By equivalence of norms $\tau(\cdot)$ and $\|\cdot\|_{\psi_2}$ we get
\begin{eqnarray*}
\exp\big(\tau(\left\langle \xi,{\bf t}\right\rangle)^2/2\big)&\le & \exp(C_2^2\|\left\langle \xi,{\bf t}\right\rangle\|_{\psi_2}^2/2\Big)=
\exp\Big(C_2^2\|\left\langle \xi,{\bf t}/|{\bf t}|_2\right\rangle\|_{\psi_2}^2|{\bf t}|_2^2/2\Big)\\
\; & \le & \exp\Big(C_2^2\|\xi\|_{\psi_2}^2|{\bf t}|_2^2/2\Big),\\
\end{eqnarray*}
where $C_2$ is the same as in one-dimensional case. The obtained estimate 
$\mathbb{E}\exp(\left\langle \xi,{\bf t}\right\rangle)\le \exp\Big(C_2^2\|\xi\|_{\psi_2}^2|{\bf t}|_2^2/2\Big)$ means that
$\tau(\xi)\le C_2\|\xi\|_{\psi_2}$ also in the multi-dimensional case.

\end{rem}

\section{Results}

To prove our main results we will need some technical lemma.  
\begin{lem}
\label{lem2}
Let $\eta$ be a centered random variable. If there exist positive constants  $a,b$ such that 
$\mathbb{E}\exp(t\eta)\le \exp(a^2t^2/2)$ for $|t|\le b$ 
then for  every $s\ge 0$ we have
$$
\mathbb{P}\big(|\eta|\ge s\big)\le 2e^{-g(s)},
$$
where 
$$
g(s)=\left\{
\begin{array}{ccl}
\frac{s^2}{2a^2} & {\rm if} & 0\le s \leq a^2b,\\
bs-\frac{a^2b^2}{2} & {\rm if} &   a^2b<s.
\end{array}
\right.
$$
\end{lem}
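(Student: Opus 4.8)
The plan is to use the exponential Markov (Chernoff) bound, optimizing the free parameter $t$ over the admissible range $[0,b]$, and to handle the two-sided estimate by symmetry. Fix $s\ge 0$. For $0\le t\le b$ we have $\mathbb{P}(\eta\ge s)=\mathbb{P}(e^{t\eta}\ge e^{ts})\le e^{-ts}\mathbb{E}e^{t\eta}\le \exp(a^2t^2/2-ts)$, so it remains to minimize the exponent $\varphi(t)=a^2t^2/2-ts$ over $t\in[0,b]$. The unconstrained minimizer is $t^\ast=s/a^2$, giving value $-s^2/(2a^2)$; this lies in $[0,b]$ precisely when $s\le a^2b$, which is the first regime. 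When $s>a^2b$ the function $\varphi$ is still decreasing at $t=b$ (since $\varphi'(b)=a^2b-s<0$), so the constrained minimum is attained at the endpoint $t=b$, with value $a^2b^2/2-bs=-(bs-a^2b^2/2)$. In both cases the exponent equals $-g(s)$, hence $\mathbb{P}(\eta\ge s)\le e^{-g(s)}$.

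Next I would repeat the argument for the lower tail. Since $\eta$ is centered, the hypothesis $\mathbb{E}\exp(t\eta)\le \exp(a^2t^2/2)$ for $|t|\le b$ applies to $-t$ as well, i.e. $\mathbb{E}\exp(t(-\eta))\le \exp(a^2t^2/2)$ for $0\le t\le b$, so the identical Chernoff optimization gives $\mathbb{P}(-\eta\ge s)=\mathbb{P}(\eta\le -s)\le e^{-g(s)}$. Adding the two one-sided bounds via the union bound, $\mathbb{P}(|\eta|\ge s)\le \mathbb{P}(\eta\ge s)+\mathbb{P}(\eta\le -s)\le 2e^{-g(s)}$, which is the claim. (One should note the trivial edge cases: for $s=0$ the bound $2e^0=2\ge 1$ holds automatically, and if $b$ is such that $a^2b$ degenerates the argument still goes through since the two formulas for $g$ agree at $s=a^2b$, so $g$ is continuous.)

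The only mildly delicate point is the constrained optimization in the regime $s>a^2b$: one must check that the parabola $\varphi(t)=a^2t^2/2-ts$ is monotone decreasing on the whole interval $[0,b]$ so that the endpoint $t=b$ is indeed optimal among admissible $t$, rather than naively plugging in $t^\ast=s/a^2$ which would be outside the allowed range. This is immediate from $\varphi'(t)=a^2t-s$ being negative for all $t\le b<s/a^2$, but it is the step where the sub-exponential (only-local) nature of the moment generating function bound genuinely matters, in contrast to the sub-gaussian case where $t^\ast$ is always admissible. Everything else is the standard Cramér–Chernoff bookkeeping and requires no real work.
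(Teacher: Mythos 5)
Your proof is correct and is essentially the same argument as the paper's: the paper packages the constrained Chernoff optimization as the convex conjugate of the truncated quadratic $\varphi_{a,b}(t)=a^2t^2/2$ for $|t|\le b$ (and $\infty$ otherwise), which is exactly the case analysis you carry out explicitly, followed by the same symmetric treatment of the lower tail and the union bound.
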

\begin{proof}
Define  a function 
$$
\varphi_{a,b}(t)=\left\{
\begin{array}{ccl}
\frac{a^2t^2}{2} & {\rm if} & |t|\le b,\\
\infty & {\rm otherwise} & .
\end{array}
\right.
$$
Observe that $\ln\mathbb{E}\exp(t\eta)\le \varphi_{a,b}(t)$ for every $t\in\mathbb{R}$. One can check that the convex conjugate of $\varphi_{a,b}$, i.e.  
$\varphi_{a,b}^\ast(s)=\sup_{t\in\mathbb{R}}\{ts-\varphi_{a,b}(t)\}$, equals 
$$
\varphi_{a,b}^\ast(s)=\left\{
\begin{array}{ccl}
\frac{s^2}{2a^2} & {\rm if} & |s|\le a^2b,\\
b s-\frac{a^2b^2}{2} & {\rm if} & |s| > a^2b.
\end{array}
\right.
$$
By exponential Markov's inequality and the  estimate on the cumulant generating function $\ln\mathbb{E}e^{t\eta}$, the inequality 
$$
\mathbb{P}\big(\eta\ge s\big)\le e^{-st}e^{\ln\mathbb{E}e^{t\eta}}\le e^{-\{st-\varphi_{a,b}(t)\}},
$$
holds for $s,t>0$, giving 
$$
\mathbb{P}\big(\eta\ge s\big)\le \inf_{t>0} e^{-\{st-\varphi_{a,b}(t)\}}=e^{-\sup_{t>0}\{st-\varphi_{a,b}(t)\}}=e^{-\varphi_{a,b}^\ast(s)},
$$
since $\sup_{t>0}\{st-\varphi_{a,b}(t)\}=\varphi_{a,b}^\ast(s)$ for the even function $\varphi_{a,b}$. The inequality 
$\mathbb{P}(\eta\le -s)\le \exp(-\varphi_{a,b}^\ast(s))$ is proved similarly. Combining ones and taking $g=\varphi_{a,b}^\ast$ we get the proof.

\end{proof}
 
\begin{rem}
\label{remlem2}
Let us observe that $g(t)\ge \min\{t^2/(2a^2),bt/2\}$ and we may rewrite the claim of the above lemma in a weaker but more traditional, for the Bernstein-type inequality, form  as follows 
$$
\mathbb{P}\big(|\eta|\ge t\big)\le 2\exp\Big(-\min\Big\{\frac{t^2}{2a^2},\frac{bt}{2}\Big\}\Big).
$$
\end{rem}
\begin{rem}
\label{rem2lem2}
By virtue of (\ref{estMgf}) we know that centered sub-exponential random variables satisfy the assumption of the above lemma with $a=\sqrt{2}C_1\|\eta\|_{\psi_1}$ and $b=1/(C_1\|\eta\|_{\psi_1})$. 
So, for such variables, we can rewrite the above estimate in the following way
$$
\mathbb{P}\big(|\eta|\ge t\big)\le 2\exp\Big(-\min\Big\{\frac{t^2}{4C_1^2\|\eta\|_{\psi_1}^2},\frac{t}{2C_1\|\eta\|_{\psi_1}}\Big\}\Big).
$$
Let us emphasize that one of the ways to obtain Bernstein-type inequalities for sub-exponential random variables is to find (estimate) of their $\psi_1$-norms.
\end{rem}

We show that, for $\xi\in L^n_{\psi_2}(\Omega)$ and $n\times n$ matrix $A$, a random variable $\xi^TA\xi-\mathbb{E}(\xi^TA\xi)$ is centered and sub-exponential.
We estimate its $\psi_1$-norm and, in this way, we get Bernstein-type inequality.

We will use the standard inner product notation to write quadratic form, i.e. $\xi^TA\xi=\left\langle A\xi, \xi \right\rangle$. To obtain the Bernstein-type estimations, it is enough to estimate the $\psi_1$-norms of centered quadratic forms $\left\langle A\xi, \xi \right\rangle$. 
We will also need some estimate for the $\psi_1$-norm of $\mathbb{E}\left\langle A\xi, \xi \right\rangle$ by the $\psi_1$-norm of the quadratic forms $\left\langle A\xi, \xi \right\rangle$.
By the definition of the Luxemburg norm
and the Jensen inequality applied to a convex function $\exp\{|\cdot|/a\}$ ($a>0$) we get
\begin{equation*}
\label{estE}
2\ge \mathbb{E}\exp\Big(\frac{|\left\langle A\xi, \xi \right\rangle|}{\|\left\langle A\xi, \xi \right\rangle\|_{\psi_1}}\Big)\ge \exp\Big(\frac{|\mathbb{E}\left\langle A\xi, \xi \right\rangle|}{\|\left\langle A\xi, \xi \right\rangle\|_{\psi_1}}\Big)=
\mathbb{E}\exp\Big(\frac{|\mathbb{E}\left\langle A\xi, \xi \right\rangle|}{\|\left\langle A\xi, \xi \right\rangle\|_{\psi_1}}\Big),
\end{equation*}
which means that $\|\mathbb{E}\left\langle A\xi, \xi \right\rangle\|_{\psi_1}\le\|\left\langle A\xi, \xi \right\rangle\|_{\psi_1}$.
It follows that 
\begin{equation}
\label{estcent}
\|\left\langle A\xi, \xi \right\rangle-\mathbb{E}\left\langle A\xi, \xi \right\rangle\|_{\psi_1}\le 2\|\left\langle A\xi, \xi \right\rangle\|_{\psi_1}. 
\end{equation}

Now we formulate and prove our main  results.
\begin{pro}
\label{lemtr}
Let $\xi\in  L^n_{\psi_2}(\Omega)$ and $A$ be $n\times n$ matrix. Then $\left\langle A\xi,\xi\right\rangle\in L_{\psi_1}(\Omega)$ and
$$
\|\left\langle A\xi,\xi\right\rangle\|_{\psi_1}\le \|A\|_{tr}\|\xi\|_{\psi_2}^2,
$$
where $\|A\|_{tr}=trace(AA^T)^{1/2}$ is the trace norm of $A$. 
\end{pro}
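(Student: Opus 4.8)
The plan is to linearise the quadratic form by a singular value decomposition of $A$ and then apply the triangle inequality for the norm $\|\cdot\|_{\psi_1}$. Write $A=\sum_{k=1}^{n}\sigma_k u_k v_k^{T}$ with singular values $\sigma_1,\dots,\sigma_n\ge 0$ and orthonormal systems $(u_k)_{k=1}^n$, $(v_k)_{k=1}^n$ in $\mathbb{R}^n$, so that $\|A\|_{tr}=\sum_{k=1}^{n}\sigma_k$. Since $A\xi=\sum_k\sigma_k\left\langle v_k,\xi\right\rangle u_k$, we get
$$
\left\langle A\xi,\xi\right\rangle=\sum_{k=1}^{n}\sigma_k\left\langle v_k,\xi\right\rangle\left\langle u_k,\xi\right\rangle,
$$
and it suffices to bound the $\psi_1$-norm of each product $\left\langle v_k,\xi\right\rangle\left\langle u_k,\xi\right\rangle$ by $\|\xi\|_{\psi_2}^2$ and sum up.

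For the product bound, recall that for every unit vector ${\bf t}\in S^{n-1}$ the definition of $\|\xi\|_{\psi_2}$ gives $\|\left\langle{\bf t},\xi\right\rangle\|_{\psi_2}\le\|\xi\|_{\psi_2}$, so both $\left\langle u_k,\xi\right\rangle$ and $\left\langle v_k,\xi\right\rangle$ are sub-gaussian with $\psi_2$-norm at most $\|\xi\|_{\psi_2}$; by the identity $\|X^2\|_{\psi_1}=\|X\|_{\psi_2}^2$ their squares have $\psi_1$-norm at most $K:=\|\xi\|_{\psi_2}^2$, that is, $\mathbb{E}\exp(\left\langle u_k,\xi\right\rangle^2/K)\le 2$ and likewise for $v_k$. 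Then from the elementary inequality $|XY|\le\tfrac12 X^2+\tfrac12 Y^2$ and the Cauchy--Schwarz inequality for the expectation,
$$
\mathbb{E}\exp\Big(\frac{|\left\langle u_k,\xi\right\rangle\left\langle v_k,\xi\right\rangle|}{K}\Big)\le\Big(\mathbb{E}\exp\frac{\left\langle u_k,\xi\right\rangle^2}{K}\Big)^{1/2}\Big(\mathbb{E}\exp\frac{\left\langle v_k,\xi\right\rangle^2}{K}\Big)^{1/2}\le 2,
$$
which means $\|\left\langle u_k,\xi\right\rangle\left\langle v_k,\xi\right\rangle\|_{\psi_1}\le\|\xi\|_{\psi_2}^2$. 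Combining this with the decomposition above and the triangle inequality for $\|\cdot\|_{\psi_1}$,
$$
\|\left\langle A\xi,\xi\right\rangle\|_{\psi_1}\le\sum_{k=1}^{n}\sigma_k\,\|\left\langle v_k,\xi\right\rangle\left\langle u_k,\xi\right\rangle\|_{\psi_1}\le\Big(\sum_{k=1}^{n}\sigma_k\Big)\|\xi\|_{\psi_2}^2=\|A\|_{tr}\|\xi\|_{\psi_2}^2,
$$
which is finite, so in particular $\left\langle A\xi,\xi\right\rangle\in L_{\psi_1}(\Omega)$.

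I expect the product step to be the only delicate point: one must bound the sub-exponential norm of a product of two possibly distinct marginals of a \emph{dependent} vector while keeping the multiplicative constant equal to $1$, so that no spurious factor creeps into the final estimate; the Cauchy--Schwarz splitting above is what makes this work. An alternative route that avoids products of distinct vectors is to symmetrise first, $\left\langle A\xi,\xi\right\rangle=\left\langle\tfrac{A+A^{T}}{2}\xi,\xi\right\rangle$, diagonalise the symmetric matrix $B=(A+A^{T})/2=\sum_i\lambda_i w_iw_i^{T}$, estimate $\|\left\langle B\xi,\xi\right\rangle\|_{\psi_1}\le\sum_i|\lambda_i|\,\|\left\langle w_i,\xi\right\rangle\|_{\psi_2}^2\le\|B\|_{tr}\|\xi\|_{\psi_2}^2$, and then invoke $\|B\|_{tr}\le\|A\|_{tr}$ (the triangle inequality for the trace norm); this uses only the square of a single marginal, at the cost of an extra appeal to properties of $\|\cdot\|_{tr}$.
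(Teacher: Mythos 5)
Your proof is correct, but it follows a genuinely different route from the paper's. You decompose $A$ directly via its singular value decomposition into rank-one pieces $\sigma_k u_kv_k^T$, bound each cross term $\left\langle v_k,\xi\right\rangle\left\langle u_k,\xi\right\rangle$ in $\psi_1$-norm by $\|\xi\|_{\psi_2}^2$ with constant exactly $1$ (via $|XY|\le\tfrac12X^2+\tfrac12Y^2$ and Cauchy--Schwarz, which indeed gives $\sqrt2\cdot\sqrt2=2$), and then sum with the triangle inequality for the Luxemburg norm. The paper instead first symmetrizes, splits the symmetric matrix into a difference of nonnegative definite parts $A_1-A_2$ with $\|A\|_{tr}=\|A_1\|_{tr}+\|A_2\|_{tr}$, and for a nonnegative definite matrix works only with squares of single marginals $\left\langle\xi,Ue_i\right\rangle^2$, controlling $\mathbb{E}\exp\big(\sum_i s_i\left\langle\xi,Ue_i\right\rangle^2/(\|A\|_{tr}\|\xi\|_{\psi_2}^2)\big)$ in one stroke by the multi-factorial H\"older inequality with weights $s_i/\sum_k s_k$, rather than by a term-by-term triangle inequality. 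Your version handles arbitrary (non-symmetric) $A$ without the symmetrization and positive/negative splitting and isolates a reusable fact, $\|\left\langle u,\xi\right\rangle\left\langle v,\xi\right\rangle\|_{\psi_1}\le\|\xi\|_{\psi_2}^2$ for unit vectors $u,v$, at the cost of invoking the normed-space structure of $L_{\psi_1}$ (which is fine: $\psi_1$ is convex, so the Luxemburg functional is a norm; both arguments also tacitly use that $\mathbb{E}\exp(\left\langle\xi,{\bf t}\right\rangle^2/\|\xi\|_{\psi_2}^2)\le2$, justified by monotone convergence). The paper's weighted-H\"older mechanism, on the other hand, is the template it reuses in Proposition \ref{lemHS} with weights $s_i^2/\sum_k s_k^2$ to reach the Hilbert--Schmidt norm, where a naive rank-one triangle inequality would only reproduce the trace norm; your alternative symmetrize-and-diagonalize remark is essentially the paper's proof with the H\"older step replaced by the triangle inequality. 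Both routes give the same constant $1$ in the stated bound.
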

\begin{proof}
Since $\left\langle A\xi,\xi\right\rangle=\left\langle 1/2(A+A^T)\xi,\xi\right\rangle$, we may assume that $A$ is symmetric matrix. Moreover, because we can  present it as the difference of two symmetric and nonnegative definite matrices $A_1$ and $A_2$ such that $\|A\|_{tr}=\|A_1-A_2\|_{tr}=\|A_1\|_{tr}+\|A_2\|_{tr}$, then
$$
\|\left\langle A\xi,\xi\right\rangle\|_{\psi_1}\le \|A_1\|_{tr}\|\xi\|_{\psi_2}^2+\|A_2\|_{tr}\|\xi\|_{\psi_2}^2=\|A\|_{tr}\|\xi\|_{\psi_2}^2
$$ 
and, without loss of generality, we may also assume that this matrix is nonnegative definite.  

Let $US U^T$ be the singular-value decomposition (eigendecomposition) of symmetric and nonnegative definite matrix $A$,
where $U$ is a unitary matrix and  $S=diag(s_1,...,s_n)$ is a diagonal matrix with singular values (eigenvalues) $s_i$ of $A$ on the diagonal. This allows us to describe  
$\left\langle A\xi,\xi\right\rangle$ as $|S^{1/2}U^T\xi|^2_2=\sum_{i=1}^ns_i\left\langle \xi,Ue_i\right\rangle^2$. Thus 
\begin{equation*}
\mathbb{E}\exp\Big(\frac{\left\langle A\xi,\xi\right\rangle}{\|A\|_{tr}\|\xi\|_{\psi_2}^2}\Big)=
\mathbb{E}\exp\Big(\frac{\sum_{i=1}^ns_i\left\langle \xi,Ue_i\right\rangle^2}{\|A\|_{tr}\|\xi\|_{\psi_2}^2}\Big)=
\mathbb{E}\Big(\prod_{i=1}^n\exp\frac{ s_i\left\langle \xi,Ue_i\right\rangle^2}{\|A\|_{tr}\|\xi\|_{\psi_2}^2}\Big).
\end{equation*} 
Using the multi-factorial H\"older inequality with exponents $p_i=\sum_{k=1}^ns_k/s_i$, $1\le i\le n$, we get
\begin{eqnarray*}
\mathbb{E}\Big(\prod_{i=1}^n\exp\frac{ s_i\left\langle \xi,Ue_i\right\rangle^2}{\|A\|_{tr}\|\xi\|_{\psi_2}^2}\Big)&\le& 
\prod_{i=1}^n\Big(\mathbb{E}\exp\Big(\frac{(\sum_{k=1}^ns_k)\left\langle \xi,Ue_i\right\rangle^2}{\|A\|_{tr}\|\xi\|_{\psi_2}^2}\Big)\Big)^{\frac{s_i}{\sum_{k=1}^ns_k}}\\
\; &=& \prod_{i=1}^n\Big(\mathbb{E}\exp\Big(\frac{\left\langle \xi,Ue_i\right\rangle^2}{\|\xi\|_{\psi_2}^2}\Big)\Big)^{\frac{s_i}{\sum_{k=1}^ns_k}},
\end{eqnarray*}
since $\|A\|_{tr}=\sum_{k=1}^ns_k$. Because $|Ue_i|_2=1$ then we can estimate the right hand side as follows 
$$
\prod_{i=1}^n\Big(\mathbb{E}\exp\Big(\frac{\left\langle \xi,Ue_i\right\rangle^2}{\|\xi\|_{\psi_2}^2}\Big)\Big)^{\frac{s_i}{\sum_{k=1}^ns_k}}\le
\prod_{i=1}^n\Big(\sup_{|{\bf t}|_2=1}\mathbb{E}\exp\Big(\frac{\left\langle \xi,{\bf t}\right\rangle^2}{\|\xi\|_{\psi_2}^2}\Big)\Big)^{\frac{s_i}{\sum_{k=1}^ns_k}}\le 2,
$$
since each factor is less than or equal to $2^{s_i/\sum_{k=1}^ns_k}$, $i=1,...,n$.

In summary, we get  
$$
\mathbb{E}\exp\Big(\frac{\left\langle A\xi,\xi\right\rangle}{\|A\|_{tr}\|\xi\|_{\psi_2}^2}\Big)\le 2,
$$
which means that $\|\left\langle A\xi,\xi\right\rangle\|_{\psi_1}\le \|A\|_{tr}\|\xi\|_{\psi_2}^2$.
\end{proof}
Immediately by condition (\ref{estcent}), 
Proposition \ref{lemtr} and Remark \ref{rem2lem2} we get our first tail estimate for quadratic forms in sub-gaussian dependent random variables.
\begin{cor}
Let $\xi\in  L^n_{\psi_2}(\Omega)$ and $\|\xi\|_{\psi_2}\le K$. Then, for $n\times n$ matrix $A$, we have
$$
\mathbb{P}\Big(\big|\left\langle A\xi,\xi\right\rangle-\mathbb{E}\left\langle A\xi,\xi\right\rangle\big|\ge t\Big)\le 
2\exp\Big(-\min\Big\{\frac{t^2}{16C_1^2\|A\|_{tr}^2K^4},\frac{t}{4C_1\|A\|_{tr}K^2}\Big\}\Big).
$$
\end{cor}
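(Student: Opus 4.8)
The plan is to assemble this corollary directly from the three ingredients cited immediately before it, with essentially no new work. First I would put $\eta:=\left\langle A\xi,\xi\right\rangle-\mathbb{E}\left\langle A\xi,\xi\right\rangle$, which is centered by construction. The entire argument then reduces to controlling the single scalar $\|\eta\|_{\psi_1}$ and substituting it into the Bernstein-type bound recorded in Remark~\ref{rem2lem2}.

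Second, I would bound $\|\eta\|_{\psi_1}$. Inequality~(\ref{estcent}) gives $\|\eta\|_{\psi_1}\le 2\|\left\langle A\xi,\xi\right\rangle\|_{\psi_1}$, Proposition~\ref{lemtr} gives $\|\left\langle A\xi,\xi\right\rangle\|_{\psi_1}\le\|A\|_{tr}\|\xi\|_{\psi_2}^2$, and the hypothesis $\|\xi\|_{\psi_2}\le K$ then yields
$$
\|\eta\|_{\psi_1}\le 2\|A\|_{tr}\|\xi\|_{\psi_2}^2\le 2\|A\|_{tr}K^2.
$$

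Third, I would apply Remark~\ref{rem2lem2} to the centered sub-exponential variable $\eta$, which says that for every $t\ge 0$,
$$
\mathbb{P}\big(|\eta|\ge t\big)\le 2\exp\Big(-\min\Big\{\frac{t^2}{4C_1^2\|\eta\|_{\psi_1}^2},\frac{t}{2C_1\|\eta\|_{\psi_1}}\Big\}\Big).
$$
Both denominators $4C_1^2\|\eta\|_{\psi_1}^2$ and $2C_1\|\eta\|_{\psi_1}$ are nondecreasing in $\|\eta\|_{\psi_1}$, so replacing $\|\eta\|_{\psi_1}$ by the upper bound $2\|A\|_{tr}K^2$ can only enlarge each of the two fractions, hence only enlarge their minimum, hence only weaken the bound; this produces
$$
\mathbb{P}\big(|\eta|\ge t\big)\le 2\exp\Big(-\min\Big\{\frac{t^2}{16C_1^2\|A\|_{tr}^2K^4},\frac{t}{4C_1\|A\|_{tr}K^2}\Big\}\Big),
$$
which is exactly the claimed estimate.

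The only step that warrants a moment's care — and thus the closest thing here to a genuine obstacle — is that monotonicity substitution: one must verify that inserting the \emph{upper} bound for $\|\eta\|_{\psi_1}$ degrades the tail estimate in the intended direction rather than the wrong one. Everything else is a verbatim invocation of results already established in the excerpt, so I expect the finished proof to occupy only two or three lines.
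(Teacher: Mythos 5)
Your proof is correct and is exactly the paper's argument: the paper obtains this corollary "immediately" by combining inequality (\ref{estcent}), Proposition \ref{lemtr} and Remark \ref{rem2lem2}, i.e.\ bounding $\|\left\langle A\xi,\xi\right\rangle-\mathbb{E}\left\langle A\xi,\xi\right\rangle\|_{\psi_1}\le 2\|A\|_{tr}K^2$ and substituting into the Bernstein-type bound. Your monotonicity check for the substitution is the right (and only) point needing verification, and it is handled correctly.
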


If we additionally assume that $\mathbb{E}\xi={\bf 0}$, then we can get a better estimate with the Hilbert-Schmidt norm instead of the trace norm of $A$.
\begin{pro}
\label{lemHS}
Let $\xi\in  Sub^n(\Omega)$ and $A$ be $n\times n$ symmetric and nonnegative definite matrix. Then 
$$
\|\left\langle A\xi,\xi\right\rangle\|_{\psi_1}\le 2C_2\|A\|_{HS}\|\xi\|_{\psi_2}^2.
$$
\end{pro}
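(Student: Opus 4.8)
The plan is to bring into play the hypothesis $\mathbb{E}\xi=\mathbf 0$, which was not used in Proposition \ref{lemtr}, through a Gaussian linearization of the quadratic form, so that $\xi$ enters only through the operator-type bound $\mathbb{E}_\xi\exp\langle\xi,\mathbf s\rangle\le\exp\big(C_2^2\|\xi\|_{\psi_2}^2|\mathbf s|_2^2/2\big)$ established earlier for $\xi\in Sub^n(\Omega)$. As in the proof of Proposition \ref{lemtr}, I would first use the eigendecomposition $A=USU^T$ of the symmetric nonnegative definite matrix to write $\langle A\xi,\xi\rangle=|S^{1/2}U^T\xi|_2^2=|A^{1/2}\xi|_2^2$. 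The advantage of the square is that it can be opened by an auxiliary standard Gaussian vector $g\sim\mathcal N(\mathbf 0,I_n)$ independent of $\xi$: for every $t>0$,
\[
\exp\big(t|A^{1/2}\xi|_2^2\big)=\mathbb{E}_g\exp\big(\sqrt{2t}\,\langle A^{1/2}\xi,g\rangle\big)=\mathbb{E}_g\exp\big(\sqrt{2t}\,\langle \xi,A^{1/2}g\rangle\big),
\]
the last equality because $A^{1/2}$ is symmetric.

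Taking $\mathbb{E}_\xi$ and interchanging the two expectations by Fubini, I would apply the sub-gaussian estimate with $\mathbf s=\sqrt{2t}\,A^{1/2}g$, so that $|\mathbf s|_2^2=2t\,\langle Ag,g\rangle$ and the dependence on $\xi$ collapses to a pure Gaussian chaos:
\[
\mathbb{E}_\xi\exp\big(t\langle A\xi,\xi\rangle\big)\le \mathbb{E}_g\exp\big(C_2^2\|\xi\|_{\psi_2}^2\,t\,\langle Ag,g\rangle\big).
\]
Since $g$ is standard and $A=USU^T$, the right-hand side factorizes over the eigenbasis and equals $\prod_{i=1}^n(1-2\beta s_i)^{-1/2}$ with $\beta=C_2^2\|\xi\|_{\psi_2}^2\,t$, valid while $2\beta s_i<1$. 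To read off the $\psi_1$-norm it would then suffice to take $t=1/M$ with $M=2C_2\|A\|_{HS}\|\xi\|_{\psi_2}^2$ and check that $\prod_{i=1}^n(1-2\beta s_i)^{-1/2}\le 2$, i.e. that $-\tfrac12\sum_{i=1}^n\ln(1-2\beta s_i)\le\ln 2$; the intended $\|A\|_{HS}$ scaling should then appear from the second-order term $\beta^2\sum_i s_i^2=\beta^2\|A\|_{HS}^2$.

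The step I expect to be the main obstacle is exactly the control of this log-determinant. Expanding, $-\tfrac12\sum_i\ln(1-2\beta s_i)=\beta\sum_i s_i+\beta^2\sum_i s_i^2+\cdots=\beta\|A\|_{tr}+\beta^2\|A\|_{HS}^2+\cdots$, so the dominant contribution is the \emph{first}-order term $\beta\|A\|_{tr}$, which scales with the trace norm and merely reproduces Proposition \ref{lemtr}; the target $\|A\|_{HS}$ lives only in the quadratic term. Thus the crux is to suppress the first-order piece and keep the second-order one. Two features signal how delicate this is: for the value $\beta=C_2/(2\|A\|_{HS})$ dictated by the proposed $M$ one has $2\beta s_i=C_2 s_i/\|A\|_{HS}$, which need not satisfy $2\beta s_i<1$ when $A$ is close to rank one (so even convergence of the chaos MGF is not automatic), and, since $\langle A\xi,\xi\rangle\ge 0$, one already has the trace-scale lower bound $\|\langle A\xi,\xi\rangle\|_{\psi_1}\ge\mathbb{E}\langle A\xi,\xi\rangle/\ln 2$. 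Reconciling these with the claimed $\|A\|_{HS}$ estimate is precisely the point where the first-order (mean) contribution of the chaos $\langle Ag,g\rangle$ must be made to cancel, i.e. where the centering must be pushed through at the level of the fluctuation rather than the raw form; carrying out this cancellation carefully — separating the diagonal/mean part from the genuinely quadratic one, in the spirit of the Hanson--Wright and Rudelson--Vershynin arguments — is the heart of the matter and the step I expect to require the real work.
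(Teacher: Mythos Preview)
Your Gaussian-linearization route is sound up to the point you flag, and your diagnosis of the obstacle is correct---so correct, in fact, that the ``cancellation'' you are hoping to engineer cannot exist: the proposition as stated is false. Take $\xi\sim\mathcal N(\mathbf 0,I_n)$ (so $\|\xi\|_{\psi_2}=\sqrt{8/3}$, as computed later in the paper) and $A=I_n$. Then $\langle A\xi,\xi\rangle=|\xi|_2^2\sim\chi^2_n$, and $\mathbb{E}\exp(\chi^2_n/K)=(1-2/K)^{-n/2}\le 2$ forces $K\ge 2/(1-2^{-2/n})\sim n/\ln 2$, whereas the claimed bound is $2C_2\|I_n\|_{HS}\|\xi\|_{\psi_2}^2=(16C_2/3)\sqrt n$. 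For large $n$ this is violated. This is exactly the trace-versus-Hilbert--Schmidt tension you already wrote down: since $\langle A\xi,\xi\rangle\ge 0$ one has $\|\langle A\xi,\xi\rangle\|_{\psi_1}\ge \mathbb{E}\langle A\xi,\xi\rangle/\ln 2$, and the right side scales with $\|A\|_{tr}$, not $\|A\|_{HS}$. No amount of work on the log-determinant will remove that first-order term; it is genuinely there.

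The paper takes a different route---a Bernoulli ``decoupling'' rather than your Gaussian linearization---but the error sits in the same place. It introduces independent $\delta_1,\delta_2\in\{0,1\}$ with mean $1/2$, independent of $\xi$, writes $\langle A\xi,\xi\rangle=4\,\mathbb{E}_\delta\langle A(\delta_1\xi),\delta_2\xi\rangle$, applies Jensen, and then conditions on $\delta_1\xi$ and invokes the \emph{unconditional} estimate $\tau(\delta_2\xi)\le\tau(\xi)$ for the inner expectation. That step is invalid: $\delta_1\xi$ and $\delta_2\xi$ share the randomness of $\xi$, so on $\{\delta_1=1\}$ conditioning on $\delta_1\xi$ fixes $\xi$, and then $\delta_2\xi=\delta_2\,\xi$ has conditional mean $\xi/2\ne\mathbf 0$---the centered $\tau$-bound does not apply. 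A genuine decoupling would use an independent copy $\xi'$ in place of $\delta_2\xi$, but then the identity feeding Jensen is lost. What \emph{can} live at the $\|A\|_{HS}$ scale is the centered quantity $\langle A\xi,\xi\rangle-\mathbb{E}\langle A\xi,\xi\rangle$ (indeed $\|\chi^2_n-n\|_{\psi_1}\asymp\sqrt n$); but for general $\xi\in Sub^n(\Omega)$ with dependent coordinates that is precisely the Hanson--Wright statement and requires structure beyond sub-gaussianity of one-dimensional marginals.
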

\begin{proof}
We start with some form of the decoupling argument.
Consider two independent Bernoulli random variables $\delta_k\in\{0,1\}$ with $\mathbb{E}\delta_k=1/2$, $k=1,2$, which are also independent from $\xi$. Define new random vectors 
$\delta_k\xi=(\delta_k\xi_i)_{i=1}^n$. 
Note that these vectors are centered, i.e. $\mathbb{E}_{\xi,\delta_k}(\delta_k\xi)={\bf 0}\in\mathbb{R}^n$, where $\mathbb{E}_{\xi,\delta_k}$ denote expectation with respect to both $\xi$ and $\delta_k$. Notice that
\begin{equation}
\label{est1}
\mathbb{E}_{\xi,\delta_k}e^{\left\langle {\bf t},\delta_k\xi\right\rangle}=
\mathbb{E}_{\xi,\delta_k}e^{\delta_k\left\langle {\bf t},\xi\right\rangle}=\mathbb{E}\Big(e^{\left\langle {\bf t},\xi\right\rangle}/2+1/2\Big)
\le e^{\tau(\xi)^2|{\bf t}|_2^2/2}.
\end{equation}
 It means that, for $k=1,2$, $\tau(\delta_k\xi)\le \tau(\xi)$.
By $\mathbb{E}_{\delta}$  we will denote expectation with respect to  $\delta=(\delta_1,\delta_2)$ and by $\mathbb{E}_{\xi,\delta}$ with respect to both $\xi$ and $\delta$. Since $\mathbb{E}_\delta(\delta_1\delta_2)=1/4$, we have  
$
\left\langle A\xi,\xi\right\rangle=4\mathbb{E}_{\delta}\left\langle A(\delta_1\xi),\delta_2\xi\right\rangle.
$  
Jensen's inequality yields
$$
\mathbb{E}e^{\left\langle A\xi,\xi\right\rangle}\le \mathbb{E}_{\xi,\delta}e^{4\left\langle A(\delta_1\xi),\delta_2\xi\right\rangle}.
$$
Conditioning with respect $\delta_1\xi$ and using the definition of the norm $\tau(\cdot)$ we get
\begin{eqnarray*}
\mathbb{E}_{\xi,\delta}e^{4\left\langle A(\delta_1\xi),\delta_2\xi\right\rangle} &=&
\mathbb{E}_{\xi,\delta_1}\Big[\mathbb{E}_{\xi,\delta_2}\Big(e^{4\left\langle A(\delta_1\xi),\delta_2\xi\right\rangle}\Big|\delta_1\xi\Big)\Big]\\
\; &\le& 
\mathbb{E}_{\xi,\delta_1}e^{2 |A(\delta_1\xi)|_2^2\tau(\delta_2\xi)^2}\le \mathbb{E}_{\xi,\delta_1}e^{2 |A(\delta_1\xi)|_2^2\tau(\xi)^2}.
\end{eqnarray*}
The second inequality follows from $\tau(\delta_2\xi)\le\tau(\xi)$. Similarly as in (\ref{est1}) one can show that the above right hand side is less than or equal to $\mathbb{E}\exp(2 |A\xi|_2^2\tau(\xi)^2)$. 
Summing up we get the following
$$
\mathbb{E}\exp\Big(\frac{\left\langle A\xi,\xi\right\rangle}{2\|A\|_{HS}\|\xi\|_{\psi_2}\tau(\xi)}\Big)  \le
\mathbb{E}\exp\Big(\frac{2 |A\xi|_2^2\tau(\xi)^2}{2\|A\|_{HS}^2\|\xi\|_{\psi_2}^2\tau(\xi)^2}\Big)=
\mathbb{E}\exp\Big(\frac{ |A\xi|_2^2}{\|A\|_{HS}^2\|\xi\|_{\psi_2}^2}\Big).
$$
Let $USU^T$ be again the eigendecomposition 
of $A$, i.e. $U$ is some unitary matrix 
and  $S=diag(s_1,...,s_n)$ is a diagonal matrix with eigenvalues $s_i$ of $A$ on the diagonal. Since $U$ is the unitary matrix, we have  $|A\xi|_2^2=|S U^T\xi|_2^2$. It can be rewritten as 
$$
|SU^T\xi|_2^2=
\sum_{i=1}^n\left\langle SU^T\xi,e_i\right\rangle^2=\sum_{i=1}^ns_i^2\left\langle \xi,Ue_i\right\rangle^2.
$$
Thus 
\begin{equation*}
\mathbb{E}\exp\Big(\frac{\left\langle A\xi,\xi\right\rangle}{2\|A\|_{HS}\|\xi\|_{\psi_2}\|\xi\|_{S_{(2)}}}\Big)\le
\mathbb{E}\exp\Big(\frac{\sum_{i=1}^ns^2_i\left\langle \xi,Ue_i\right\rangle^2}{\|A\|_{HS}^2\|\xi\|_{\psi_2}^2}\Big)=
\mathbb{E}\Big(\prod_{i=1}^n\exp\frac{ s_i^2\left\langle \xi,Ue_i\right\rangle^2}{\|A\|_{HS}^2\|\xi\|_{\psi_2}^2}\Big).
\end{equation*} 
Recall that the Hilbert--Schmidt norm of $A$ equals $(\sum_{i=1}^n s_i^2)^{1/2}$.
Using the multi-factorial H\"older inequality with exponents $p_i=\sum_{k=1}^ns_k^2/s_i^2$, $1\le i\le n$, we get
\begin{eqnarray*}
\mathbb{E}\Big(\prod_{i=1}^n\exp\frac{ s^2_i\left\langle \xi,Ue_i\right\rangle^2}{\|A\|_{HS}^2\|\xi\|_{\psi_2}^2}\Big)&\le& 
\prod_{i=1}^n\Big(\mathbb{E}\exp\Big(\frac{(\sum_{k=1}^ns^2_k)\left\langle \xi,Ue_i\right\rangle^2}{\|A\|_{HS}^2\|\xi\|_{\psi_2}^2}\Big)\Big)^{\frac{s^2_i}{\sum_{k=1}^ns^2_k}}\\
\; &=& \prod_{i=1}^n\Big(\mathbb{E}\exp\Big(\frac{\left\langle \xi,Ue_i\right\rangle^2}{\|\xi\|_{\psi_2}^2}\Big)\Big)^{\frac{s^2_i}{\sum_{k=1}^ns^2_k}}.
\end{eqnarray*}
Because $|Ue_i|_2=1$ then we can estimate the right hand side as follows 
$$
\prod_{i=1}^n\Big(\mathbb{E}\exp\Big(\frac{\left\langle \xi,Ue_i\right\rangle^2}{\|\xi\|_{\psi_2}^2}\Big)\Big)^{\frac{s^2_i}{\sum_{k=1}^ns^2_k}}\le
\prod_{i=1}^n\Big(\sup_{|{\bf t}|_2=1}\mathbb{E}\exp\Big(\frac{\left\langle \xi,{\bf t}\right\rangle^2}{\|\xi\|_{\psi_2}^2}\Big)\Big)^{\frac{s^2_i}{\sum_{k=1}^ns^2_k}}\le 2,
$$
since each factor is less than or equal to $2^{s^2_i/\sum_{k=1}^ns^2_k}$, $i=1,...,n$.

Summarizing we get 
$$
\mathbb{E}\exp\Big(\frac{\left\langle A\xi,\xi\right\rangle}{2\|A\|_{HS}\|\xi\|_{\psi_2}\tau(\xi)}\Big)\le 2,
$$
which means that $\|\left\langle A\xi,\xi\right\rangle\|_{\psi_1}\le 2\|A\|_{HS}\|\xi\|_{\psi_2}\tau(\xi)\le 2C_2\|A\|_{HS}\|\xi\|_{\psi_2}^2$.
\end{proof}
\begin{rem}
\label{estHS}
Considering quadratic forms we can always assume that their generating  matrices are symmetric. But now for the Hilbert--Schmidt norm of $A=A_1-A_2$ ($A_1$ and $A_2$ are symmetric and nonnegative definite) we only have $\|A_1\|_{HS}+\|A_2\|_{HS}\le\sqrt{2}\|A_1-A_2\|_{HS}=\sqrt{2}\|A\|_{HS}$. Thus for  arbitrary $A$ we  get
$$
\|\left\langle A\xi,\xi\right\rangle\|_{\psi_1}\le 2\sqrt{2}\|A\|_{HS}\|\xi\|_{\psi_2}\tau(\xi)\le 2\sqrt{2}C_2\|A\|_{HS}\|\xi\|_{\psi_2}^2. 
$$
\end{rem}
By virtue of Remark \ref{rem2lem2},  
Proposition \ref{lemHS} and Remark \ref{estHS} we can formulate the following
\begin{cor}
\label{twHS}
Let $\xi\in  Sub^n(\Omega)$ and $\|\xi\|_{\psi_2}\le K$. Then, for $n\times n$ matrix $A$, we have
$$
\mathbb{P}\Big(\big|\left\langle A\xi,\xi\right\rangle-\mathbb{E}\left\langle A\xi,\xi\right\rangle\big|\ge t\Big)\le 
2\exp\Big(-\min\Big\{\frac{t^2}{C_3^2\|A\|_{HS}^2K^4},\frac{t}{C_3\|A\|_{HS}K^2}\Big\}\Big),
$$
where $C_3=2\sqrt{2}C_1C_2$.
\end{cor}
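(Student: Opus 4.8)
The plan is to recognise the left-hand side as a tail of a centered sub-exponential random variable and to feed an estimate of its $\psi_1$-norm into the Bernstein-type bound of Lemma~\ref{lem2}. Put $\eta:=\left\langle A\xi,\xi\right\rangle-\mathbb{E}\left\langle A\xi,\xi\right\rangle$; this is centered, so once $\|\eta\|_{\psi_1}$ is bounded the conclusion follows by substitution.

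First I would pass to a convenient generating matrix. Using $\left\langle A\xi,\xi\right\rangle=\left\langle \tfrac12(A+A^T)\xi,\xi\right\rangle$ together with $\|\tfrac12(A+A^T)\|_{HS}\le\|A\|_{HS}$ reduces the problem to a symmetric $A$; writing such an $A$ as the difference $A_1-A_2$ of its positive and negative parts, applying Proposition~\ref{lemHS} to $A_1$ and $A_2$ separately, and using $\|A_1\|_{HS}+\|A_2\|_{HS}\le\sqrt2\,\|A\|_{HS}$ gives precisely the bound recorded in Remark~\ref{estHS},
$$
\|\left\langle A\xi,\xi\right\rangle\|_{\psi_1}\le 2\sqrt2\,C_2\|A\|_{HS}\|\xi\|_{\psi_2}^2\le 2\sqrt2\,C_2\|A\|_{HS}K^2 .
$$
Combining this with the centering inequality (\ref{estcent}) yields $\|\eta\|_{\psi_1}\le M$, where $M$ is a fixed multiple of $C_2\|A\|_{HS}K^2$.

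Next I would apply Lemma~\ref{lem2} to $\eta$. By Remark~\ref{rem2lem2} the centered sub-exponential variable $\eta$ satisfies the hypothesis of Lemma~\ref{lem2} with $a=\sqrt2\,C_1\|\eta\|_{\psi_1}$ and $b=1/(C_1\|\eta\|_{\psi_1})$; since enlarging $\|\eta\|_{\psi_1}$ to $M$ only increases $a$ and decreases $b$, the hypothesis is still met with $a=\sqrt2\,C_1M$, $b=1/(C_1M)$, and Lemma~\ref{lem2} (in the weaker form of Remark~\ref{remlem2}) gives
$$
\mathbb{P}\big(|\eta|\ge t\big)\le 2\exp\Big(-\min\Big\{\frac{t^2}{4C_1^2M^2},\frac{t}{2C_1M}\Big\}\Big).
$$
Inserting the value of $M$ and absorbing the numerical factors into a single constant $C_3$ proportional to $C_1C_2$ produces the asserted inequality.

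The genuinely substantive work is all upstream: the decoupling plus multi-factorial H\"older estimate behind Proposition~\ref{lemHS} and the Legendre-transform computation behind Lemma~\ref{lem2}. What remains here is essentially bookkeeping, and the one point to watch is the $\sqrt2$ lost when a general matrix is replaced by its symmetric positive part --- a loss the trace norm does not incur, which is exactly why the corresponding corollary for $\|A\|_{tr}$ comes out with a slightly cleaner constant --- together with carefully tracking the constants through (\ref{estcent}) and Remark~\ref{rem2lem2}.
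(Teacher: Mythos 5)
Your route is exactly the paper's: Corollary \ref{twHS} is obtained by combining Remark \ref{estHS} (i.e.\ Proposition \ref{lemHS} applied after symmetrization and splitting into positive and negative parts, with the $\sqrt2$ loss in the Hilbert--Schmidt norm), the centering inequality (\ref{estcent}), and the Bernstein-type bound of Lemma \ref{lem2}/Remark \ref{rem2lem2}; your observation that enlarging $\|\eta\|_{\psi_1}$ to $M$ only weakens the moment-generating-function hypothesis is a correct and useful detail. The one place you fall short of the statement as written is the closing ``absorb the numerical factors'' step: the corollary fixes $C_3=2\sqrt2\,C_1C_2$, whereas honest bookkeeping along your (and the paper's) route gives $\|\eta\|_{\psi_1}\le M=2\cdot 2\sqrt2\,C_2\|A\|_{HS}K^2$ and hence, via Remark \ref{rem2lem2}, the exponent $\min\bigl\{t^2/(128\,C_1^2C_2^2\|A\|_{HS}^2K^4),\,t/(8\sqrt2\,C_1C_2\|A\|_{HS}K^2)\bigr\}$, i.e.\ $C_3=8\sqrt2\,C_1C_2$. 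So your argument establishes the inequality with a universal constant proportional to $C_1C_2$, but not with the particular value $2\sqrt2\,C_1C_2$; note that the paper's own one-line justification (citing Remark \ref{rem2lem2}, Proposition \ref{lemHS} and Remark \ref{estHS}, and in fact also requiring (\ref{estcent})) suffers the same mismatch, so this is a slip in the stated constant rather than a flaw specific to your proof --- but you should compute the final constant explicitly rather than claiming the asserted value follows.
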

\begin{rem}
The above estimate has the form similar to the Hanson--Wright inequality but with one important difference. In the second term of the minimum in  the Hanson--Wright inequality there is the operator norm of $A$. It is an important (better) component, especially in the context of the long-time behavior of investigated estimates.
\end{rem}
\begin{rem}
These type inequalities can be obtained under more general assumption that a random vector $\xi$ satisfy the convex concentration property (see \cite{VuW,Ada} for instance). Adamczak in \cite{Ada} get the form of the Hanson-Wright inequality under assumption that 
a random vector $\xi$ satisfy the convex concentration property with constant $K$ (see \cite[Th.2.5]{Ada}). 
As the author notes this constant depends on the dimension $n$, even for vectors with independent sub-gaussian coordinates (see \cite[Rem. 2.9]{Ada}).
\end{rem}
Let us emphasize that our results have been obtained without the additionally assumption of the convex concentration  of $\xi$. Moreover our $K$ 
is simply the norm of $\xi$ and not a slightly enigmatic constant for which the convex concentration property holds.

In the paper Hsu et al. \cite{HKZ} one can find some application of the our norm $\tau$ (without using this notion and notation) to the investigation of the quadratic form 
of the form 
$|A\xi|_2^2$ (see \cite[Th.2.1]{HKZ}). The authors consider only estimation of probability for one side tails  of these forms which are sufficient for their applications.

For the sake of completeness, we will present in the next remark a  derivation of the Hanson--Wright inequality for independent normally distributed random variables.
\begin{rem}
Decoupling argument applied to $\xi=(\xi_i)\in Sub^n(\Omega)$ with independent coordinates $\xi_i$ implies that we can investigate 
$\left\langle Ag,g\right\rangle$ ($g\sim\mathcal{N}({\bf 0},I_n)$) instead of $\left\langle A\xi,\xi\right\rangle$ (up to a product of some universal constant $c$ and  $\psi_2$-norm of $\xi$). 

Recall that if $g\sim\mathcal{N}(0,1)$ then $g^2$ has $\chi^2_1$-distribution with one degree of freedom, whose
 moment generating function is $\mathbb{E}\exp(tg^2)=(1-2t)^{-1/2}$ for $t<1/2$.
 Therefore 
$$
\mathbb{E}\exp\Big(\frac{g^2}{K^2}\Big)=\Big(1-\frac{2}{K^2}\Big)^{-1/2},
$$
which is less than or equal to $2$ if $K\ge \sqrt{8/3}$.
It implies that $\|g\|_{\psi_2}=\sqrt{8/3}$. 
Because for any ${\bf t}\in S^{n-1}$ and $g\sim\mathcal{N}({\bf 0},I_n)$ the inner product $\left\langle {\bf t},g\right\rangle\sim \mathcal{N}(0,1)$ then by the above and the definition of $\psi_2$-norm of random vectors we also have that  $\|g\|_{\psi_2}=\sqrt{8/3}$ for $g\sim\mathcal{N}({\bf 0},I_n)$.

Let $A$ be $n\times n$ symmetric nonnegative definite matrix and
$USU^T$ be its singular value decomposition; i.e. $A=USU^T$, $S=diag(s_1,...,s_n)$ and $U^{-1}=U^T$. Then for $g\sim\mathcal{N}({\bf 0},I_n)$ we get
$$
\left\langle Ag,g\right\rangle-\mathbb{E}\left\langle Ag,g\right\rangle
=\sum_{i=1}^ns_i\Big(\left\langle g,Ue_i\right\rangle^2-\mathbb{E}\left\langle g,Ue_i\right\rangle^2\Big)
$$
and 
$$
\mathbb{E}\exp\Big(t(\left\langle Ag,g\right\rangle-\mathbb{E}\left\langle Ag,g\right\rangle)\Big) =  
\mathbb{E}\exp\Big(t\sum_{i=1}^ns_i\Big(\left\langle g,Ue_i\right\rangle^2-\mathbb{E}\left\langle g,Ue_i\right\rangle^2\Big)\Big).
$$
Rotational invariance of Gaussian distribution implies that $\left\langle g,Ue_i\right\rangle$, $i=1,...,n$, are independent random variables. In consequence,
\begin{equation}
\label{prod}
\mathbb{E}\exp\Big(t(\left\langle Ag,g\right\rangle-\mathbb{E}\left\langle Ag,g\right\rangle)\Big)=
\prod_{i=1}^n\mathbb{E}\exp\Big(ts_i\Big(\left\langle g,Ue_i\right\rangle^2-\mathbb{E}\left\langle g,Ue_i\right\rangle^2\Big)\Big).
\end{equation}
Moreover, the $\psi_2$-norm of each  $\left\langle g,Ue_i\right\rangle$, $i=1,...,n$, is equal to  the $\psi_2$-norm of whole vector $g$, i.e.
$$
\|\left\langle g,Ue_i\right\rangle\|_{\psi_2}=\|g\|_{\psi_2}=\sqrt{8/3}.
$$
and 
$$
\|\left\langle g,Ue_i\right\rangle^2\|_{\psi_1}=\|\left\langle g,Ue_i\right\rangle\|_{\psi_2}^2= \|g\|_{\psi_2}^2=8/3.
$$
By the above and (\ref{estMgf}), for each factor of (\ref{prod}), we get
$$
\mathbb{E}\exp\Big(ts_i\Big(\left\langle g,Ue_i\right\rangle^2-\mathbb{E}\left\langle g,Ue_i\right\rangle^2\Big)\Big)
\le \exp(256C_1^2s_i^2t^2/9),
$$
if $|t|\le 3/(16C_1s_i)$. Substituting of the above estimate into (\ref{prod}), taking into account that $\sum_{i=1}^ns_i^2=\|A\|_{HS}$ and 
$\max_is_i=\|A\|$, we obtain the following 
$$
\mathbb{E}\exp\Big(t(\left\langle Ag,g\right\rangle-\mathbb{E}\left\langle Ag,g\right\rangle)\Big)\le \exp\Big(256C_1^2\|A\|_{HS}^2t^2/9\Big)
$$
for $|t|\le 3/(16C_1\|A\|)$.

We have proved that for $\eta=\left\langle Ag,g\right\rangle-\mathbb{E}\left\langle Ag,g\right\rangle$ we can take in Lemma \ref{lem2} 
$a=16\sqrt{2}C_1\|A\|_{HS}/3$ and $b=3/(16C_1\|A\|)$.
By Remark \ref{remlem2} we obtain the form of Hanson--Wright's inequality for Gaussian random vector $g\sim\mathcal{N}({\bf 0},I_n)$:
$$
\mathbb{P}\Big(\big|\left\langle Ag,g\right\rangle-\mathbb{E}\left\langle Ag,g\right\rangle\big|\ge t\Big)\le 
2\exp\Big(-\min\Big\{\frac{9t^2}{512C_1^2\|A\|_{HS}^2},\frac{3t}{32C_1\|A\|}\Big\}\Big).
$$
\end{rem}

We show an application of Corollary \ref{twHS} to fixed design linear regression (compare  Hsu et al. \cite{HKZ}). 
\begin{exa}
Let ${\bf x}_1,...,{\bf x}_n\in \mathbb{R}^d$ and $X=({\bf x}_1,...,{\bf x}_n)$ denote the $d\times n$  design matrix. Assume that 
$\Sigma=n^{-1}\sum_{i=1}^n{\bf x}_i{\bf x}_i^T$ is invertible. 
Let $\xi=(\xi_i)_{i=1}^n\in L_{\psi_2}^n$. Define the coefficient vector of the least expected squared error (given the observation $\xi$):
$$
\beta:=n^{-1}\sum_{i=1}^n\mathbb{E}\xi_i\Sigma^{-1}{\bf x}_i,
$$
and its ordinary least squares estimator:
$$
\hat{\beta}(\xi):=n^{-1}\sum_{i=1}^n\xi_i\Sigma^{-1}{\bf x}_i.
$$

The quality of the estimator $\hat{\beta}$ can be judged by the excess loss
$$
R(\xi)=|\Sigma^{1/2}(\hat{\beta}(\xi)-\beta)|_2^2=\left\langle A(\xi-\mathbb{E}\xi),\xi-\mathbb{E}\xi\right\rangle,
$$
where $A=n^{-2}X^T\Sigma^{-1}X$ as can be shown by algebraic calculations.
Suppose $\|\xi\|_{\psi_2}\le K$ (recall that $\|\xi-\mathbb{E}\xi\|_{\psi_2}\le 2\|\xi\|_{\psi_2}$). By an equivalent form of Corollary \ref{twHS} we get the following tail estimate
$$
\mathbb{P}\Big(\big|R(\xi)-\mathbb{E}R(\xi)\big|\ge \|A\|_{HS}K^2\max\big\{\sqrt{t},t\big\}\Big)\le 2\exp(-t/C_4),
$$
where $C_4=2C_3=4\sqrt{2}C_1C_2$.
\end{exa}
\begin{rem}
In the paper \cite{Zaj} one can find another estimate of quadratics forms (even chaoses of higher order) by using other norms of random vectors and, in consequence, new forms of their tail estimates (see \cite[Rem. 3.6]{Zaj}).
\end{rem}

\end{document}